\numberwithin{equation}{section}
\newtheorem{thm}{Theorem}[section]
\newtheorem{cor}[thm]{Corollary}
\newtheorem{prop}[thm]{Proposition}
\theoremstyle{definition}
\theoremstyle{remark}
\newtheorem*{rem}{Remark}
\newcommand{\F}{\mathcal{F}}
\def\PP{{\textbf P}}
\def\OO{\mathcal{O}}
\def\F{\mathcal{F}}
\def\cM{\mathcal{M}}
\def\mm{\overline{\mathcal{M}}}
\DeclareMathOperator{\expdim}{exp.dim}
\DeclareMathOperator{\Pic}{Pic}
\DeclareMathOperator{\Cliff}{Cliff}
\DeclareMathOperator{\Sym}{Sym}
\DeclareMathOperator{\Ker}{Ker}
\DeclareMathOperator{\Coker}{Coker}
\def\irr{\mathrm{irr}}
\newcommand{\supth}[1]{\ensuremath{#1^{\mathrm{th}}}}
\setlist[enumerate,1]{label={\rm(\arabic*)}, ref={\rm\arabic*}}
\title{{Difference varieties and the Green--Lazarsfeld secant conjecture}}
\author{Gavril Farkas}
\address{Humboldt-Universit\"at zu Berlin, Institut f\"ur Mathematik,  Unter den Linden 6, 10099 Berlin, Germany}
\email{farkas@math.hu-berlin.de}
\begin{document}


\maketitle

\begin{prelims}

\DisplayAbstractInEnglish

\bigskip

\DisplayKeyWords

\medskip

\DisplayMSCclass

\end{prelims}


\newpage

\setcounter{tocdepth}{1}

\tableofcontents


\section{Introduction}

For a smooth curve $C$ of genus $g$ and a line bundle $L$ on $C$, following Green, \textit{cf.} \cite{green-koszul}, the Koszul cohomology group $K_{p,q}(C,L)$ of $p$-syzygies of weight $q$ is obtained from the minimal free graded $\Sym\ H^0(C,L)$-resolution of the coordinate ring
$$
\Gamma_C(L):=\bigoplus_{q\in \mathbb Z} H^0\left(C, L^{q}\right).
$$
We write $b_{p,q}(C,L):=\dim\ K_{p,q}(C,L)$ for the number of $p$-syzygies of weight $q$ of the embedded curve $\phi_L\colon C\rightarrow \PP^r$.

Green's conjecture, \textit{cf.} \cite{green-koszul}, characterizing the vanishing of the Betti numbers of a canonical curve $C\subseteq \PP^{g-1}$ in terms of the Clifford index $\Cliff(C)$ of the curve has probably been the most influential statement in the theory of syzygies. Even though Green's conjecture for arbitrary smooth curves remains open, several solutions have been found in the case of generic curves, starting with Voisin's landmark papers \cite{voisin-even, voisin-odd} in the early 2000s, continuing more recently with the approach via Koszul modules in \cite{AFPRW} (which has the benefit of proving the statement in positive characteristic as well, establishing a conjecture of Eisenbud and Schreyer, \textit{cf.} \cite{ES}), and finishing with Kemeny's simpler proof using $K3$ surfaces; \textit{cf.} \cite{K2}. Even more recently, alternative proofs of each of these new approaches to the generic Green's conjecture have been put forward; \textit{cf.} \cite{RS,Ra,Pa}.

The \emph{secant conjecture} proposed by Green and Lazarsfeld, \textit{cf.} \cite{GL1}, is a generalization of Green's conjecture to the case of line bundles of not too low degree. It predicts that if $L\in \Pic^d(C)$ is a line bundle on a curve $C$ of genus $g$ such that
\begin{equation}\label{ineq:gl}
d\geq 2g+p+1-2h^1(C,L)-\Cliff(C),
\end{equation}
then one has $K_{p,2}(C,L)= 0$ if and only if $L$ is $(p+1)$-very ample.\footnote{Recall that a line bundle $L$ on $C$ is $(p+1)$-very ample if every effective divisor of degree $p+2$ on $C$ imposes independent conditions on the linear system $|L|$.} Disposing quickly of the case $H^1(C,L)\neq 0$, which is well known to be equivalent to Green's conjecture for $C$, for non-special line bundles, the secant conjecture can be reformulated as the following equivalence:
\begin{equation}\label{conj:GL}
K_{p,2}(C,L)\neq 0\  \Longleftrightarrow \  L-\omega_C\in C_{p+2}-C_{2g-d+p}.
\end{equation}

Here $C_b-C_a\subseteq \Pic^{b-a}(C)$ denotes the \emph{difference variety} of line bundles $\OO_C(D_b-D_a)$, where $D_b$ and $D_a$ are effective divisors of degrees $b$ and $a$ on $C$. The secant conjecture has been established in all degrees for a general pair $(C,L)$ in
\cite[Theorem 1.3]{FK1}. In the case $\deg(L)=2g+p+1-c$, there are complete results for an \emph{arbitrary} smooth curve $C$ when $c=1$, see \cite[Theorem 2]{GL1}, and when $c=2$, as long as $C$ is not bielliptic; see \cite{Ag}. There are furthermore complete results in the case $d=2g$ and when $g$ is odd and $C$ has maximal gonality; see \cite[Theorem 1.4]{FK1}.

In this paper we establish the secant conjecture for curves of genus $g$ in the divisorial case $d=g+2p+3$, that is, when the difference variety
$C_{p+2}-C_{2g-d+p}$ on the right-hand side of the conjectured equivalence (\ref{conj:GL}) describes a divisor in the corresponding Jacobian variety.

\begin{thm}\label{thm:GL}
Let $C$ be a smooth curve of genus $g$, and fix $\left\lceil \frac{g-3}{2} \right\rceil\leq p \leq g-3$. Assume
$$\dim\ W^1_{p+2}(C)=2p-g+2.$$ One has the following equivalence for a line bundle $L\in \Pic^{g+2p+3}(C)$:
$$K_{p,2}(C,L)=0 \ \Longleftrightarrow \ L-\omega_C\notin C_{p+2}-C_{g-p-3}.$$
\end{thm}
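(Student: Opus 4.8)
I would split the claimed equivalence into its two implications. The implication that corresponds, via the reformulation (\ref{conj:GL}), to $L-\omega_C\in C_{p+2}-C_{g-p-3}\Rightarrow K_{p,2}(C,L)\neq 0$ is the classical Green--Lazarsfeld direction: if $L$ fails to be $(p+1)$-very ample, there is an effective divisor $\xi$ of degree $p+2$ with $h^0(L-\xi)=h^0(L)-(p+1)$, so the $p+2$ points of $\xi$ span a $\PP^p$ rather than a $\PP^{p+1}$ inside $\PP^r$, and this $(p+1)$-secant $p$-plane produces an explicit nonzero linear syzygy. This holds for every curve and uses no hypothesis, so I would only recall it. The entire content lies in the converse, and this is where the assumption $\dim\ W^1_{p+2}(C)=2p-g+2$ must enter.

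For the converse I would first reformulate $K_{p,2}$ cohomologically. Since $\deg L=g+2p+3>2g-2$, the bundle $L$ is non-special with $h^0(L)=2p+4$ and $r=2p+3$; writing $M_L=\Ker\bigl(H^0(L)\otimes\OO_C\to L\bigr)$ for the Lazarsfeld kernel bundle, the Koszul exact sequence identifies $K_{p,2}(C,L)\cong H^1(C,\wedge^{p+1}M_L\otimes L)$. Using $\det M_L=L^{-1}$ and the isomorphism $\wedge^{p+1}M_L^{\vee}\cong\wedge^{p+2}M_L\otimes L$ together with Serre duality, this becomes
\[
K_{p,2}(C,L)^{\vee}\cong H^0\bigl(C,\wedge^{p+2}M_L\otimes\omega_C\bigr).
\]
The theorem is thus equivalent to the statement that these sections vanish precisely when $L-\omega_C\notin C_{p+2}-C_{g-p-3}$, i.e.\ precisely when $L$ is $(p+1)$-very ample. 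The task is to show that, under the Brill--Noether hypothesis, every such section is forced by a secant divisor.

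I would then work on the Jacobian, comparing the syzygy locus $\mathcal{Z}=\{L\in\Pic^{g+2p+3}(C):K_{p,2}(C,L)\neq 0\}$, which is closed by upper semicontinuity, with the translated difference variety $\Sigma=\omega_C+(C_{p+2}-C_{g-p-3})$, the image of $(D,E)\mapsto\omega_C+D-E$. The easy direction gives $\Sigma\subseteq\mathcal{Z}$, and $\Sigma$ is irreducible as the image of an irreducible variety; since $(p+2)+(g-p-3)=g-1$ it is a divisor. The subtlety I would flag explicitly is that a Riemann--Roch computation gives $\chi(\wedge^{p+1}M_L\otimes L)=\frac{1}{p+2}\binom{2p+2}{p+1}\bigl[(2p+3)(p+3)-g(p+1)\bigr]$, which is \emph{strictly positive} throughout the range $g\leq 2p+3$ forced by $p\geq\bigl\lceil\frac{g-3}{2}\bigr\rceil$. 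Hence generically $H^1=0$, and $\Sigma$ is an \emph{excess} degeneracy locus: its expected codimension as a cohomology jump locus exceeds one, yet it is an honest divisor. Consequently a naive Porteous computation of $[\mathcal{Z}]$ cannot close the gap, and, because the theorem must hold for an arbitrary curve satisfying the dimension hypothesis rather than merely a general one, degeneration to $K3$ surfaces in the spirit of Voisin or Kemeny is unavailable: the argument has to be intrinsic to $C$.

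The main obstacle is therefore the vanishing $H^0(\wedge^{p+2}M_L\otimes\omega_C)=0$ for $(p+1)$-very ample $L$, equivalently the statement that $\mathcal{Z}$ acquires no component off $\Sigma$. A slope estimate is useless here, since $\wedge^{p+2}M_L\otimes\omega_C$ has positive slope, so the vanishing genuinely reflects the fine geometry of the embedded curve. My plan is to organise the sections of $\wedge^{p+2}M_L\otimes\omega_C$ according to the secant divisors $\xi\in C_{p+2}$ that carry them and to bound $\dim\mathcal{Z}$ through the resulting incidence correspondence over $C_{p+2}$, whose fibres are governed by the Brill--Noether loci $W^1_{p+2}(C)$ and $C_{g-p-3}$. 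The hypothesis $\dim\ W^1_{p+2}(C)=2p-g+2$ is exactly what caps the dimensions of these fibres, so that the excess contributing to $\mathcal{Z}$ is accounted for entirely by genuine secants and $\dim\mathcal{Z}\leq g-1$ with equality only along $\Sigma$. Combined with the inclusion $\Sigma\subseteq\mathcal{Z}$ and the irreducibility of $\Sigma$, writing $\mathcal{Z}=\Sigma+R$ with $R\geq 0$ effective then forces $R=0$ and yields $\mathcal{Z}=\Sigma$, which is the theorem.
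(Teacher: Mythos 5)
Your setup is correct as far as it goes: the easy direction, the identification $K_{p,2}(C,L)^{\vee}\cong H^0\bigl(C,\bigwedge^{p+2}M_L\otimes\omega_C\bigr)$ for non-special $L$, and the observation that the syzygy locus $\mathcal{Z}\subseteq\Pic^{g+2p+3}(C)$ is an \emph{excess} cohomology-jump locus are all accurate. But the core of the hard direction is missing, and the plan you outline for it is circular. The incidence correspondence over $C_{p+2}$ that you propose parametrizes, by construction, only those line bundles that already admit a $(p+2)$-secant $p$-plane; its image is exactly $\Sigma$. It therefore gives no handle on $\dim\mathcal{Z}$, because a nonzero section of $\bigwedge^{p+2}M_L\otimes\omega_C$ comes with no a priori divisor $\xi\in C_{p+2}$ attached to it --- extracting such a $\xi$ from a Koszul class is precisely the content of the theorem, and your proposal contains no mechanism for doing so. Moreover, even granting $\dim\mathcal{Z}\leq g-1$ ``with equality only along $\Sigma$'', the conclusion $\mathcal{Z}=\Sigma$ would not follow: the theorem is a set-theoretic equality, and a divisor-class identity $\mathcal{Z}=\Sigma+R$ with $R=0$ says nothing about possible components of $\mathcal{Z}$ of codimension at least $2$ lying off $\Sigma$, which a dimension bound on top-dimensional components does not exclude.

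The missed idea is that a degeneration argument \emph{is} available --- indeed it is the paper's proof --- only it is not a degeneration of $C$ to a $K3$ section but a degeneration \emph{upward} in genus: one attaches $2\ell$ (respectively $2\ell+1$) rational bridges $R_j$ to $C$ at general pairs of points, where $\ell=p+1-i$ and $g=2i+1$ (respectively $g=2i$), obtaining a quasi-stable curve $X$ of genus exactly $2p+3$, with $L_X$ restricting to $L$ on $C$ and to $\OO_{R_j}(1)$ on the bridges. The hypothesis $\dim W^1_{p+2}(C)=2p-g+2$ enters exactly here: it guarantees that $X$ has maximal gonality $p+3$. Then (i) $K_{p,2}(C,L)\neq0$ lifts to $K_{p,2}(X,L_X)\neq0$ (Proposition~\ref{prop:degen}); (ii) in odd genus $2p+3$, maximal gonality and degree $d=2g$, the secant conjecture is known for \emph{every} such curve, \textit{cf.} \cite[Theorem 1.4]{FK1}, and via the identification $C_{g-a-1}-C_a=\Theta_{\bigwedge^a Q_{\omega_C}}$ of \cite{FMP} this is extended to quasi-stable curves by a comparison of divisors on Caporaso's compactified universal Jacobian (Proposition~\ref{prop:div}), yielding $H^0\bigl(X,\bigwedge^p M_{\omega_X}\otimes\omega_X^2\otimes L_X^{\vee}\bigr)\neq0$; (iii) restricting to $C$ via Mayer--Vietoris and filtering $\bigwedge^p Q_{\omega_X|C}$ by exterior powers of $Q_{\omega_C}$ produces an inclusion of translates $L-\omega_C-C_{2(2\ell-j+1)}\subseteq C_{i+j-\ell}-C_{i+\ell-j}$ for some $j\leq 2\ell+1$; and (iv) the exact membership $L-\omega_C\in C_{p+2}-C_{g-p-3}$ is then forced by secant-variety arguments (nonemptiness from \cite[Section~VIII.4]{ACGH} plus the excess-dimension theorem of \cite{AS}). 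So your methodological claim that the argument ``has to be intrinsic to $C$'' rules out precisely the strategy that works; the correct reading of the hypothesis on $W^1_{p+2}(C)$ is not as a cap on fibres of an incidence variety, but as the condition ensuring the degenerate curve $X$ stays outside the Hurwitz divisor $\mm_{2p+3,p+2}^1$, so that the known extremal case of the conjecture applies to it.
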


Note that Theorem~\ref{thm:GL}  is of interest only when $p\leq g-3$; else $d\geq 2g+p+1$, in which case the vanishing $K_{p,2}(C,L)=0$ follows automatically  from Green's result \cite[Theorem 4.a.1]{green-koszul}. Theorem~\ref{thm:GL} establishes the  secant conjecture in its strongest form for \emph{general} curves in the divisorial case.

\begin{cor}
  We fix an integer $\left\lceil \frac{g-3}{2} \right\rceil \leq p\leq g-3 $. Then the Green--Lazarsfeld secant conjecture holds for a general curve $C$ and for an arbitrary line bundle $L\in \Pic^{g+2p+3}(C)$.
\end{cor}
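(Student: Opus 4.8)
The plan is to deduce the corollary directly from Theorem~\ref{thm:GL}, so that all of the geometric substance is carried by the theorem and only genericity and numerical verifications remain. Fix $p$ with $\lceil\frac{g-3}{2}\rceil\le p\le g-3$ and let $C$ be a general curve of genus $g$. The first step is to check that the hypothesis of Theorem~\ref{thm:GL} is automatic: since $C$ is Brill--Noether general, every Brill--Noether locus attains its expected dimension, so $\dim W^1_{p+2}(C)=\rho(g,1,p+2)=2p-g+2$. I would treat the boundary value separately: when $g$ is odd and $p=\frac{g-3}{2}$ one has $\rho=-1$, which I read with the convention $\dim\varnothing=-1$, so the required equality simply records the fact that a general curve carries no $\mathfrak{g}^1_{p+2}$. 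In every case the dimension hypothesis of Theorem~\ref{thm:GL} holds.

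Next I would confirm that the degree $d=g+2p+3$ lands in the range governed by the secant conjecture. Because $d>2g-2$ for every $p$ in the stated range, each $L\in\Pic^{g+2p+3}(C)$ is non-special, so $h^1(C,L)=0$ and the conjecture takes the form of the equivalence (\ref{conj:GL}). For a general curve the Clifford index $\Cliff(C)=\lfloor\frac{g-1}{2}\rfloor$ is maximal, and substituting this together with $h^1(C,L)=0$ into (\ref{ineq:gl}) turns the required inequality into $p\ge\lceil\frac{g-3}{2}\rceil$, which is precisely our hypothesis. Thus $(C,L)$ genuinely falls under the scope of the Green--Lazarsfeld secant conjecture, and by Riemann--Roch the condition $L-\omega_C\notin C_{p+2}-C_{g-p-3}$ is exactly the $(p+1)$-very ampleness of $L$, as recorded in the passage preceding (\ref{conj:GL}).

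With these checks in place the proof concludes in one line: Theorem~\ref{thm:GL} yields $K_{p,2}(C,L)=0\iff L-\omega_C\notin C_{p+2}-C_{g-p-3}$ for every $L\in\Pic^{g+2p+3}(C)$, which is the secant conjecture for the pair $(C,L)$. The only genuine obstacle is therefore internal to Theorem~\ref{thm:GL}; for the corollary itself the delicate points are merely the Brill--Noether verification at the boundary value of $p$ (the empty-locus convention above) and the observation that the maximal Clifford index makes (\ref{ineq:gl}) hold throughout the range, with equality at the lower endpoint $p=\lceil\frac{g-3}{2}\rceil$, so that no case in the divisorial range $d=g+2p+3$ is left uncovered.
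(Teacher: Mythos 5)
Your proposal is correct and follows exactly the route the paper intends: the corollary is an immediate consequence of Theorem~\ref{thm:GL}, since a Brill--Noether general curve satisfies $\dim W^1_{p+2}(C)=\rho(g,1,p+2)=2p-g+2$ (with $W^1_{p+2}(C)=\emptyset$ at the extremal odd-genus value, i.e.\ maximal gonality), and the remaining checks---non-speciality of $L$, the inequality (\ref{ineq:gl}) via $\Cliff(C)=\lfloor\frac{g-1}{2}\rfloor$, and the translation of $(p+1)$-very ampleness into $L-\omega_C\notin C_{p+2}-C_{g-p-3}$---are the same routine verifications the paper leaves implicit. Your explicit treatment of the boundary case $p=\lceil\frac{g-3}{2}\rceil$ and of the equality case in (\ref{ineq:gl}) is a welcome filling-in of details, not a deviation.
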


An important ingredient in the proof of Theorem~\ref{thm:GL} is provided by the syzygetic interpretation of the divisorial difference varieties given in \cite{FMP}; for every smooth curve $C$ of genus $g$ and $p\leq g-3$, one has the following identification of cycles on $\Pic^{2p+5-g}(C)$:
\begin{equation}\label{eq:diffvar}
C_{p+2}-C_{g-p-3}=\left\{\xi\in \Pic^{2p+5-g}(C): H^0\left(C, \bigwedge^{p+2} M_{\omega_C}\otimes \omega_C\otimes \xi\right)\neq 0\right\},
\end{equation} where $M_{\omega_C}:=\Ker\{H^0(C,\omega_C)\otimes \OO_C \rightarrow \omega_C\}$  is the syzygy (or kernel) bundle on $C$. Note that the slope of the vector bundle $\bigwedge^{p+2} M_{\omega_C}\otimes \omega_C\otimes \xi$ is equal to $g-1$; therefore, the right-hand side of (\ref{eq:diffvar}) is indeed expected to be a divisor on $\Pic^{2p+5-g}(C)$. Using (\ref{eq:diffvar}), coupled with Green's duality theorem \cite[Theorem 2.c.6]{green-koszul}
$$K_{p,2}\left(C,L\right)^{\vee}\cong K_{r(L)-p-1,0}\left(C, \omega_C, L\right),$$
where $r(L)=\deg(L)-g=2p+3$,
Theorem~\ref{thm:GL} can be reformulated as a form of \emph{strange duality} for mixed Koszul cohomology groups. Using the notation of \cite{green-koszul}, we recall that for line bundles $L, L'$ on a curve $C$, the Koszul cohomology group $K_{p,q}(C,L',L)$ is obtained from the minimal resolution of the $\Sym\  H^0(C,L)$-module $\bigoplus_{q\in \mathbb Z} H^0\left(C, L'\otimes L^{q}\right)$.

\begin{cor}\label{cor:strange}
Let $C$ be a smooth curve of genus $g$, and fix $\left \lceil \frac{g-3}{2}\right \rceil \leq p\leq g-3$. Assuming\, $\dim\ W^1_{p+2}(C)=2p-g+2$, the following equivalence holds for a line bundle $L\in \Pic^{g+2p+3}(C)$:
$$K_{p+2,0}\left(C,\omega_C,L\right)=0\  \Longleftrightarrow \ K_{p+2,0}\left(C, L, \omega_C\right )=0.$$
\end{cor}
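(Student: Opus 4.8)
\emph{Strategy.} The plan is to funnel both vanishing statements through a single geometric condition, namely membership of $L-\omega_C$ in the difference variety $C_{p+2}-C_{g-p-3}$, equivalently the nonvanishing of $H^0\bigl(C,\bigwedge^{p+2} M_{\omega_C}\otimes L\bigr)$. The left-hand group will be reduced to the theorem directly, while the right-hand group will be handled through the kernel bundle description, and the whole point will be to check that the two descriptions really do detect the \emph{same} locus.

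\emph{The left-hand group.} First I would record that the hypothesis $p\geq\lceil\frac{g-3}{2}\rceil$ forces $\deg(\omega_C\otimes L^{-1})=g-2p-5\leq -2<0$, so that $H^1(C,L)\cong H^0(C,\omega_C\otimes L^{-1})^{\vee}=0$ and Green's duality theorem applies. As recalled in the excerpt, this gives $K_{p,2}(C,L)^{\vee}\cong K_{p+2,0}(C,\omega_C,L)$, since $r(L)-p-1=p+2$. Applying Theorem~\ref{thm:GL} together with (\ref{eq:diffvar}) for $\xi=L-\omega_C$ (so that $\omega_C\otimes\xi=L$), I obtain the clean equivalence
\[
K_{p+2,0}(C,\omega_C,L)=0 \ \Longleftrightarrow\ L-\omega_C\notin C_{p+2}-C_{g-p-3}\ \Longleftrightarrow\ H^0\bigl(C,\textstyle\bigwedge^{p+2} M_{\omega_C}\otimes L\bigr)=0.
\]

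\emph{The right-hand group.} For $K_{p+2,0}(C,L,\omega_C)$ the natural polarization is $\omega_C$, so the relevant kernel bundle is $M_{\omega_C}$ itself. Using the exact sequences $0\to\bigwedge^{i}M_{\omega_C}\to\bigwedge^{i}H^0(\omega_C)\otimes\sheafO_C\to\bigwedge^{i-1}M_{\omega_C}\otimes\omega_C\to 0$ one identifies the cycle group of the Koszul complex in bidegree $(p+2,0)$ with $H^0\bigl(C,\bigwedge^{p+2} M_{\omega_C}\otimes L\bigr)$ and realizes the Koszul group as a cokernel,
\[
K_{p+2,0}(C,L,\omega_C)=\Coker\Bigl\{\textstyle\bigwedge^{p+3}H^0(\omega_C)\otimes H^0(L-\omega_C)\xrightarrow{\ \alpha\ } H^0\bigl(C,\bigwedge^{p+2} M_{\omega_C}\otimes L\bigr)\Bigr\}.
\]
In particular the implication $H^0\bigl(C,\bigwedge^{p+2} M_{\omega_C}\otimes L\bigr)=0\Rightarrow K_{p+2,0}(C,L,\omega_C)=0$ is immediate, which combined with the previous paragraph already yields the direction ``$\Rightarrow$'' of the corollary. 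Moreover, whenever $\deg(L-\omega_C)=2p+5-g$ makes $H^0(L-\omega_C)=0$, the source of $\alpha$ vanishes and one gets the exact identity $K_{p+2,0}(C,L,\omega_C)=H^0\bigl(C,\bigwedge^{p+2} M_{\omega_C}\otimes L\bigr)$, closing the argument in that range.

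\emph{The main obstacle.} The delicate point, and the only thing separating the two groups, is the boundary term $\operatorname{im}(\alpha)$, which is nonzero precisely when $L-\omega_C$ is effective, say $L=\omega_C(D)$ with $D$ effective of degree $2p+5-g$ (note $2\le 2p+5-g\le p+2$ in our range). Here I must rule out that $\alpha$ surjects onto a nonzero cycle group, i.e.\ that $K_{p+2,0}(C,L,\omega_C)$ vanishes while $H^0\bigl(C,\bigwedge^{p+2} M_{\omega_C}\otimes L\bigr)\neq 0$. I would first observe that an effective $L-\omega_C$ of degree $\le p+2$ automatically lies in $C_{p+2}-C_{g-p-3}$ (add and subtract a common effective divisor of degree $g-p-3\ge 0$), so by the left-hand computation the group $K_{p+2,0}(C,\omega_C,L)$ is nonzero there; it then remains only to prove that $K_{p+2,0}(C,L,\omega_C)\ne 0$ in this effective case. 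The natural tool is the short exact sequence $0\to\omega_C\to L\to L|_D\to 0$ and the induced long exact sequence of Koszul cohomology with polarization $\omega_C$, which relates $K_{p+2,0}(C,L,\omega_C)$ to the linear canonical syzygy group $K_{p+2,1}(C,\omega_C)$ and to a torsion contribution coming from $L|_D$; the heart of the matter is to show that the relevant connecting maps cannot annihilate the class furnished by the nonvanishing cycle group. This reconciliation of the cycle group with the genuine Koszul cohomology group in the presence of a nontrivial subpencil $H^0(L-\omega_C)$ is, I expect, the one nonformal step in the proof.
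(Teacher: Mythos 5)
Your overall route coincides with the paper's: Corollary~\ref{cor:strange} is stated there without a separate proof, as an immediate reformulation of Theorem~\ref{thm:GL} obtained by combining Green's duality $K_{p,2}(C,L)^{\vee}\cong K_{p+2,0}(C,\omega_C,L)$ with the identification (\ref{eq:diffvar}). Your first two paragraphs carry this out correctly: the left-hand group vanishes if and only if $H^0\bigl(C,\bigwedge^{p+2}M_{\omega_C}\otimes L\bigr)=0$, this space is exactly the space of Koszul cycles for the right-hand group, and both the implication ``$\Rightarrow$'' and the case $H^0(L-\omega_C)=0$ follow at once. The gap is precisely where you flag it: you never prove that $K_{p+2,0}(C,L,\omega_C)\neq 0$ when $L-\omega_C$ is effective; you only name a tool and state an expectation. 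As written, the proposal is therefore not a complete proof.

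Moreover, the missing statement is not merely ``nonformal'' --- it is \emph{false} under the definition you are using (the paper's literal one, with the module $\bigoplus_{q\in\mathbb Z}H^0(L\otimes\omega_C^{q})$, which contributes a boundary term from $q=-1$), so no exact-sequence argument can rescue your third paragraph. Take the admissible extremal case $p=g-3$, with $C$ non-hyperelliptic (so $\dim\ W^1_{g-1}(C)=g-4$ by Martens' theorem, as the paper notes) and $L=\omega_C(D)$ for some $D\in C_{g-1}$. The cycle space is $H^0\bigl(\bigwedge^{g-1}M_{\omega_C}\otimes L\bigr)=H^0\bigl(\det M_{\omega_C}\otimes L\bigr)=H^0(\OO_C(D))\neq 0$; but $\bigwedge^{g}M_{\omega_C}=0$, so the sheaf map $\bigwedge^{g}H^0(\omega_C)\otimes\OO_C\to\bigwedge^{g-1}M_{\omega_C}\otimes\omega_C\cong\OO_C$ is an isomorphism, and your $\alpha$, being $H^0$ of this isomorphism twisted by $L\otimes\omega_C^{\vee}$, is surjective; hence $K_{g-1,0}(C,L,\omega_C)=0$. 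On the other side, $K_{g-1,0}(C,\omega_C,L)\cong K_{g-3,2}(C,L)^{\vee}\neq 0$, since $D\in V_{g-1}^{g-2}(L)$ and secant planes produce syzygies (the projection argument recalled in Section~2). So in the effective range the two groups genuinely differ once the $q=-1$ boundary is kept. The only viable reading of the corollary --- and evidently the paper's implicit one --- is that $K_{p+2,0}(C,L,\omega_C)$ \emph{is} the cycle space $H^0\bigl(\bigwedge^{p+2}M_{\omega_C}\otimes L\bigr)$, i.e.\ the module is truncated in degrees $q\geq 0$, so that no map $\alpha$ appears; with that convention your first two paragraphs already constitute a complete proof, and the program of your third paragraph is both unnecessary and unsalvageable.
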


I do not currently have a direct geometric explanation of the equivalence provided in Corollary~\ref{cor:strange}, that is, one that does not go through the main identification of \cite{FMP}.

The proof of Theorem~\ref{thm:GL} relies in an essential way on the case $d=2g$ of the secant conjecture handled in \cite[Theorem 1.4]{FK1}. When $d=2g$ and $g=2p+3$, via a comparison of two effective divisors of the universal Picard stack $\mathfrak{Pic}_g^{2g}$ parametrizing pairs $[X, L_X]$, where $X$ is a smooth curve of genus $g$ and $L_X$ is a line bundle on $X$ of degree $\deg(L_X)=2g$, for \emph{every} curve $[X]\in \cM_{g}$ having maximal gonality $p+3$, one has the following equivalence:
\begin{equation}\label{eq:equiv}
K_{p,2}(X, L_X)\neq 0 \Longleftrightarrow L_X-\omega_X\in X_{p+2}-X_{p}.
\end{equation}

We explain our strategy for proving Theorem~\ref{thm:GL} in odd genus, the case of even genus being similar. Starting with a  curve $C$ of genus $g=2i+1$, where $i\leq p+1$, and with a line bundle $L\in \Pic^{2i+2p+4}(C)$ such that $K_{p,2}(C,L)\neq 0$, the challenge is to manufacture a $(p+2)$-secant $p$-plane of the embedded curve
$\phi_L\colon C\rightarrow \PP^{2p+2}$. To that end, we set $\ell:=p+1-i\geq 0$, attach to $C$ general $2$-secant rational curves $R_1, \ldots, R_{2 \ell}$ and consider the following semistable curve:
$$X:=C\cup R_1\cup \cdots \cup R_{2 \ell}.$$
Note that $g(X)=2p+3=2i+2\ell+1$. The Brill--Noether assumption on $C$ implies that $X$ has maximal gonality $p+3$.  We consider a line bundle $L_X$ on $X$ whose  restrictions to the components of $X$ are given by
$$L_{X|C}=L \quad\text{and}\quad L_{X|R_j}\cong \OO_{R_j}(1),\quad \text{for}\ j=1, \ldots, 2\ell.$$
In particular, $\deg(L_X)=4p+6=2p_a(X)$, and the assumption $K_{p,2}(C,L)\neq 0$ quickly implies that $K_{p,2}(X,L_X)\neq 0$ as well. Using a suitable extension of (\ref{eq:equiv}) that covers the case of the nodal curve $X$ (and which has appeared in different guises in \cite{Ap} or \cite{K1}), coupled with the main result of \cite{FMP} which enables one to express the difference variety on the left-hand side of (\ref{eq:equiv}) in a way that makes sense on singular curves as well, we obtain that
$$H^0\left(X, \bigwedge^{p} M_{\omega_X}\otimes \omega_X^{2}\otimes L_X^{\vee}\right)=0.$$
Varying the points of intersection of the rational curves $R_j$ with $C$, we obtain via a homological argument that for an integer $0\leq j\leq 2\ell+1$,  the inclusion
\begin{equation}\label{eq:intro_diff1}
L-\omega_C-C_{2(2\ell-j+1)}\subseteq C_{i+j-\ell}-C_{i+\ell-j}
\end{equation}
holds, where the left-hand side is regarded as a divisor in the Jacobian $\Pic^{2j-2\ell}(C)$. An argument using secant varieties on curves then leads us  to conclude that necessarily $L-\omega_C\in C_{i+\ell+1}-C_{i-\ell-1}$, which  corresponds precisely to the inclusion (\ref{eq:intro_diff1}) in the extremal case $j=2\ell+1$. This is equivalent to $\phi_L\colon C\rightarrow \PP^{2p+2}$ having a $(p+2)$-secant $p$-plane.

\subsection*{Acknowledgments}
I profited from discussions with M.~Aprodu, M.~Kemeny and C.~Voisin on this circle of ideas.  I thank the referee for a very careful reading of this paper.

\section{Syzygies of curves}

In this section we discuss a number of basic facts on syzygies of curves that will be used throughout the paper. If $X$ is a projective variety and  $L\in \Pic(X)$ is a globally generated line bundle on $X$, the Koszul cohomology group $K_{p,q}(X, L)$ is by definition the cohomology of the complex
\begin{align*}
\bigwedge^{p+1} H^0(X,L)\otimes H^0\left(X, L^{q-1}\right)\xrightarrow{d_{p+1,q-1}} \bigwedge^p H^0(X,L)\otimes H^0\left(X, L^q\right) \\
\xrightarrow{d_{p,q}} \bigwedge^{p-1} H^0(X,L)\otimes H^0\left(X,  L^{q+1}\right),\end{align*}
where $d_{p,q}$ is the Koszul differential. With the help of the \emph{syzygy bundle}
$$M_L:=\Ker\left\{H^0(X,L)\otimes \mathcal{O}_X\longrightarrow L\right\},$$
one has the following  interpretation of the Koszul cohomology group:
\begin{equation}\label{eq:kernel_bundle}
K_{p,q}(X, L) \cong \Coker \left\{\bigwedge^{p+1} H^0(X,L)\otimes H^0(X, L^{q-1})\longrightarrow H^0\left(X, \bigwedge^p M_L\otimes L^q\right)\right\}.
\end{equation}

We also use the notation $Q_L:=M_L^{\vee}$ for the dual of the syzygy bundle. If $L$ is a non-special line bundle, using (\ref{eq:kernel_bundle}) we have the following equivalence:
$$K_{p,2}(X,L)=0\Longleftrightarrow H^1\left(X, \bigwedge^{p+1} M_L\otimes L\right)=0.$$

We will use the setting when $L$ is a globally generated line bundle on a smooth curve $C$ and $X:=C\cup R_1\cup \cdots \cup R_a$ is a nodal curve obtained from $C$ by attaching $2$-secant mutually disjoint rational curves $R_j$ meeting $C$ at general points $x_j, y_j$. Since the restriction map $\Pic(X)\rightarrow \Pic(C)$ is surjective, we can choose a line bundle $L_X\in \Pic(X)$  such that
$$L_{X| C}\cong L \quad\text{and}\quad   L_{X|R_j}\cong \OO_{R_j}(1), \quad \text{for }\  j=1, \ldots, a.$$

\begin{prop}\label{prop:degen}
One has a natural surjection $K_{p, 2}(X, L_X)\twoheadrightarrow K_{p,2}(C,L)$.
\end{prop}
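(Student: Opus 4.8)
The plan is to realize the asserted map as the natural restriction morphism along the inclusion $C\hookrightarrow X$ and to prove its surjectivity through the kernel bundle description (\ref{eq:kernel_bundle}). First I would record that restriction induces an isomorphism $H^0(X,L_X)\xrightarrow{\sim} H^0(C,L)$: tensoring the ideal sequence $0\to \mathcal{I}_{C/X}\to \OO_X\to \OO_C\to 0$ with $L_X$ and noting that $\mathcal{I}_{C/X}$ restricts on each tail to $\OO_{R_j}(-x_j-y_j)$, so that $\mathcal{I}_{C/X}\otimes L_X$ is a sum of copies of $\OO_{R_j}(-1)$, which have vanishing $H^0$ and $H^1$. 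Calling this common space $V$, the weight-one terms of the two Koszul complexes agree under this identification, and restriction of the remaining groups of sections defines a morphism of complexes, hence a comparison map $K_{p,2}(X,L_X)\to K_{p,2}(C,L)$.

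Next I would analyse the syzygy bundle. Since $L_X$ is globally generated (the sections in $V$ generate $L$ on $C$ and, the points $x_j,y_j$ being general, surject onto $H^0(R_j,\OO_{R_j}(1))$), the sheaf $M_{L_X}=\Ker\{V\otimes \OO_X\to L_X\}$ is defined; at each node a surjection from a free module onto a line bundle splits locally, so $M_{L_X}$ is in fact a vector bundle on $X$. As $L_X$ is locally free, its defining sequence restricts exactly to $C$ (the relevant $\mathrm{Tor}_1$ vanishes), giving $M_{L_X}|_C\cong M_L$ and therefore $(\bigwedge^p M_{L_X}\otimes L_X^2)|_C\cong \bigwedge^p M_L\otimes L^2$. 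Through (\ref{eq:kernel_bundle}), both Koszul groups are cokernels of maps out of $\bigwedge^{p+1}V\otimes V$, and the square comparing them commutes with the identity on the left and the restriction of sections $\rho\colon H^0(X,\bigwedge^p M_{L_X}\otimes L_X^2)\to H^0(C,\bigwedge^p M_L\otimes L^2)$ on the right; surjectivity of $\rho$ then forces surjectivity on cokernels, i.e.\ on Koszul cohomology.

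The crux is thus the surjectivity of $\rho$, whose cokernel is controlled by $H^1(X,\bigwedge^p M_{L_X}\otimes L_X^2\otimes \mathcal{I}_{C/X})$, a sheaf supported on the tails $R_j$. I would therefore pin down the splitting type of $M_{L_X}$ there: from $0\to M_{L_X}|_{R_j}\to V\otimes \OO_{R_j}\to \OO_{R_j}(1)\to 0$ together with the vanishing of $\mathrm{Ext}^1(\OO_{R_j}(-1),\OO_{R_j})=H^1(\OO_{R_j}(1))$, one obtains $M_{L_X}|_{R_j}\cong \OO_{R_j}^{\oplus(r-1)}\oplus \OO_{R_j}(-1)$, where $r+1=\dim V$. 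Twisting by $L_X^2\otimes \mathcal{I}_{C/X}$, which on $R_j$ equals $\OO_{R_j}(2)\otimes\OO_{R_j}(-x_j-y_j)=\OO_{R_j}$, leaves only copies of $\OO_{R_j}$ and $\OO_{R_j}(-1)$, all with vanishing $H^1$ on $\PP^1$; hence $\rho$ is surjective and the proposition follows. I expect the main obstacle to be precisely this last step: checking that $M_{L_X}$ is genuinely locally free at the nodes and computing its restriction to the tails, since the $H^1$-vanishing—and with it the surjectivity—depends on that splitting type being exactly balanced against the twist coming from $\mathcal{I}_{C/X}$.
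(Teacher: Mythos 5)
Your proof is correct, but it takes a genuinely different route from the paper's. The paper also begins with the identification $H^0(X,L_X)\cong H^0(C,L)$ (proved via Mayer--Vietoris rather than your ideal-sheaf sequence, which is the same content), but then passes to the dual side: it invokes Green's duality $K_{p,2}(X,L_X)\cong K_{r-p-1,0}(X,\omega_X,L_X)^{\vee}$, uses the natural injections $H^0(C,\omega_C)\hookrightarrow H^0(X,\omega_X)$ and $H^0(C,\omega_C\otimes L)\hookrightarrow H^0(X,\omega_X\otimes L_X)$ to get an injection $\Ker\bigl(d^{C}_{r-p-1,0}\bigr)\hookrightarrow \Ker\bigl(d^{X}_{r-p-1,0}\bigr)$ of weight-zero mixed Koszul groups, and obtains the surjection by dualizing that injection. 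You instead stay on the cokernel side of (\ref{eq:kernel_bundle}): you check that $M_{L_X}$ is locally free at the nodes, that $M_{L_X}|_C\cong M_L$, and that $M_{L_X}|_{R_j}\cong \OO_{R_j}^{\oplus (r-1)}\oplus \OO_{R_j}(-1)$, so that the cokernel of the restriction map $\rho$ is killed by the $H^1$ of a sheaf supported on the tails which, after the exact cancellation $\OO_{R_j}(2)\otimes\OO_{R_j}(-x_j-y_j)\cong \OO_{R_j}$, is a direct sum of copies of $\OO_{R_j}$ and $\OO_{R_j}(-1)$; every step here is sound, including the local splitting argument at the nodes, the $\mathrm{Tor}$-vanishing used to restrict the defining sequence, and the generality of $x_j,y_j$ needed for $L_X$ to be globally generated on the tails. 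What your route buys: it is self-contained at the level of sheaf cohomology, it avoids appealing to Green's duality on the nodal (Gorenstein, non-smooth) curve $X$ -- an input that itself requires justification beyond the smooth case -- and it works verbatim for $K_{p,q}$ with any $q\geq 2$. What the paper's route buys: it is considerably shorter, needing only injectivity of evident maps on global sections and no splitting-type computations, and it sets up exactly the mixed Koszul groups $K_{*,0}(\,\cdot\,,\omega,\,\cdot\,)$ that reappear later in the paper (e.g.\ in Corollary \ref{cor:strange}).
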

\begin{proof}  Set $r:=h^0(C,L)-1$. From the Mayer--Vietoris sequence on $X$
  $$
  0\longrightarrow H^0(X,L_X)\longrightarrow  H^0(C,L)\oplus \left(\bigoplus_{j=1}^a H^0\left(R_j, \OO_{R_j}(1)\right)\right)\longrightarrow \bigoplus_{j=1}^a \mathbb C^2_{x_j, y_j},
  $$
we obtain that $H^0(X,L_X)\cong H^0(C,L)$. In the same way, one has natural injections $H^0(C, \omega_C)\hookrightarrow H^0(X, \omega_X)$ and
$H^0\left(C, \omega_C\otimes L\right)\hookrightarrow  H^0\left(X, \omega_X\otimes L_X\right)$. Next we apply the duality theorem
$$K_{p,2}(X, L_X)\cong K_{r-p-1,0}(X, \omega_X, L_X)^{\vee},$$
together with the  commutative diagram
\[
\xymatrixcolsep{5pc}
\xymatrix{
  \bigwedge^{r-p-1} H^0(C, L)\otimes H^0(C, \omega_C)\ar[d]_{j}  \ar[r]^{d_{r-p-1,0}^C} & \bigwedge^{r-p-2} H^0(C,L)\otimes H^0(C,\omega_C\otimes L)\ar[d]_{\bar{j}} &\\
\bigwedge^{r-1-p} H^0(X,L_X)\otimes H^0(X,\omega_X)\ar[r]^{d_{r-1-p,0}^X} &  \bigwedge^{r-p-2} H^0(X,L_X)\otimes H^0(X,\omega_X\otimes L_X)\rlap{,} &
}
\]
where $d_{r-p-1,0}^C$ and $d_{r-p-1,0}^X$ are the Koszul  differentials for $C$ and $X$, respectively.
Observe that both  $j$ and $\bar{j}$ are injective, which implies  that $\ker(d_{r-1-p,0}^C)\hookrightarrow \ker(d_{r-1-p,0}^X)$ is injective, which, after dualizing, finishes the proof.
\end{proof}

\subsection{The divisorial difference varieties in Jacobians}
For a smooth curve $C$ and integers $a, b\geq 0$, we denote by $C_b-C_a\subseteq \Pic^{b-a}(C)$ the difference variety consisting of all line bundles of the form $\OO_C(D_b-D_a)$, where $D_b$ and $D_a$ are effective divisors of degree $b$, respectively $a$, on $C$. In the case when the difference varieties are divisors in the respective Jacobians, that is, when $b=g-a-1$, the main result of \cite{FMP} provides an alternative description of the divisorial difference variety in terms of the (non-abelian) theta divisor of the vector bundle $\bigwedge^a Q_{\omega_C}$, precisely
\begin{equation}\label{eq:fmp}
C_{g-a-1}-C_a=\Theta_{\bigwedge^a Q_{\omega_C}}=\left\{\xi\in \Pic^{g-2a-1}(C): h^0\left(C, \bigwedge^a Q_{\omega_C}\otimes \xi\right)\geq 1\right\}.
\end{equation}

We shall also make use of the varieties of secant divisors for a linear system on a curve. Given a line bundle $L$ on $C$ with $h^0(C, L)=r+1$, for integers $e>0$ and $0\leq f <e$, we introduce the variety of $e$-secant $(e-f-1)$-divisors with respect to the complete linear system $|L|$
\begin{equation}\label{eq:secant_var}
V_e^{e-f}(L):=\left\{D\in C_e: \dim\ |L(-D)|\geq r-e+f\right\}.
\end{equation}
The expected dimension of $V_e^{e-f}(L)$ as a determinantal subvariety of $C_e$ is equal to
$$\expdim\ V_e^{e-f}(L)=e-f(r+1-e+f).$$ For various results on the structure of the secant loci $V_e^{e-f}(L)$, we refer to \cite[Section~8]{ACGH}, \cite{AS} and \cite{Fa}.

The Green--Lazarsfeld secant conjecture (\ref{conj:GL}) offers a characterization of those line bundles $L\in \Pic^d(C)$ satisfying the condition $K_{p,2}(C,L)\neq 0$ in terms of secant varieties of linear systems, predicting the following equivalence:
$$K_{p,2}(C,L)\neq 0 \ \Longleftrightarrow \ V_{p+2}^{p+1}(L)\neq \emptyset,$$
as long as $H^1(C,L)=0$ and $d\geq 2g+p+1-\Cliff(C)$. Note that via a projection argument, it is immediate to see that if $V_{p+2}^{p+1}(L)\neq 0$, then $K_{p,2}(C,L)\neq 0$; therefore, the secant conjecture concerns the reverse implication.
The divisorial case of the secant conjecture treated in this paper  corresponds to the situation
$$\expdim\  V_{p+2}^{p+1}(L)=-1\Longleftrightarrow d=g+2p+3.$$

A solution  to the secant conjecture that holds for \emph{every} curve of maximal gonality is provided in \cite{FK1} in the particular divisorial case $d=2g$ when the genus $g=2p+3$ is odd; see the equivalence (\ref{eq:equiv}). The aim of this paper is to establish the secant conjecture for \emph{all} divisorial cases and for curves of arbitrary gonality, at the price of allowing the base curve $C$ to satisfy certain (mild) generality assumptions of Brill--Noether nature.

We fix an odd genus $g=2p+3$ and recall that $\pi\colon \overline{\mathfrak{Pic}}^{2g}_{g}\rightarrow \mm_{g}$ denotes Caporaso's compactification, \textit{cf.} \cite{Ca}, of the universal Jacobian $\mathfrak{Pic}^{2g}_{g}\rightarrow \cM_{g}$ of degree $2g$ over the moduli space
of curves of genus~$g$. A point of $\overline{\mathfrak{Pic}}^{2g}_{g}$ corresponds to a pair $[X, L_X]$, where $X$ is a quasi-stable curve of genus $g$ and $L_X$ is a \emph{balanced} line bundle of total degree $2g$ on $X$; see \cite[Section~3]{Ca}.

\begin{prop}\label{prop:div}
Let $X$ be a quasi-stable curve of genus $g=2p+3$ having no disconnecting nodes, and let $L_X$ be a globally generated balanced bundle of degree $2g$ on $X$. Assume $X$ has gonality $p+3$. Then the following equivalence holds:
$$K_{p,2}(X,L_X)=0\ \Longleftrightarrow \ H^0\left(X, \bigwedge^p Q_{\omega_{X}}\otimes L_X\otimes \omega_X^{\vee}\right)=0.$$
\end{prop}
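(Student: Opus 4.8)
The plan is to reduce both sides of the claimed equivalence to a single condition on the point $\xi:=L_X\otimes\omega_X^{\vee}\in\Pic^2(X)$: whether $\xi$ lies on the non-abelian theta divisor $\Theta_{\bigwedge^p Q_{\omega_X}}$. First I would record the elementary input. Since $X$ has no disconnecting nodes it is connected and Gorenstein, and $\deg L_X=2g>2g-2$ forces $L_X$ to be non-special with $h^0(X,L_X)=g+1$; hence $r(L_X)=g=2p+3$, and the pair $(X,L_X)$ falls into exactly the range $d=2g$, $g=2p+3$, maximal gonality $p+3$, to which the equivalence (\ref{eq:equiv}) pertains. Via the kernel-bundle description (\ref{eq:kernel_bundle}) and Serre duality on the Gorenstein curve one also has $K_{p,2}(X,L_X)=0\iff H^1(X,\bigwedge^{p+1}M_{L_X}\otimes L_X)=0$, which by Green's duality is $K_{p+2,0}(X,\omega_X,L_X)=0$ --- the reformulation underlying Corollary~\ref{cor:strange}.

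Next I would dispose of the right-hand side tautologically. As $X$ has no disconnecting nodes, $\omega_X$ is globally generated, so $M_{\omega_X}$ and $Q_{\omega_X}=M_{\omega_X}^{\vee}$ are vector bundles of rank $g-1$ with $\det Q_{\omega_X}=\omega_X$; consequently $\bigwedge^p Q_{\omega_X}\cong\bigwedge^{p+2}M_{\omega_X}\otimes\omega_X$ and the bundle $\bigwedge^p Q_{\omega_X}\otimes\xi$ has slope $g-1$, hence Euler characteristic zero, so that $\Theta_{\bigwedge^p Q_{\omega_X}}$ is a genuine divisor in $\Pic^2(X)$. By the definition of the theta divisor, the vanishing $H^0(X,\bigwedge^p Q_{\omega_X}\otimes L_X\otimes\omega_X^{\vee})=0$ is nothing but $\xi\notin\Theta_{\bigwedge^p Q_{\omega_X}}$. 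Thus the proposition is equivalent to
$$K_{p,2}(X,L_X)\neq 0\ \Longleftrightarrow\ \xi\in\Theta_{\bigwedge^p Q_{\omega_X}}.$$

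The heart of the matter is this last equivalence: it is the extension of (\ref{eq:equiv}) to the quasi-stable curve $X$, with the difference variety $X_{p+2}-X_p$ replaced by $\Theta_{\bigwedge^p Q_{\omega_X}}$, the two agreeing on smooth curves through the identity (\ref{eq:fmp}) of \cite{FMP}, which is phrased so as to persist on Gorenstein curves. One implication is soft and valid on any such $X$: by (\ref{eq:fmp}) a point $\xi\in\Theta_{\bigwedge^p Q_{\omega_X}}$ yields a $(p+2)$-secant $p$-plane of $\phi_{L_X}$, and a standard projection argument then produces a nonzero class in $K_{p,2}(X,L_X)$; this already gives the implication $K_{p,2}(X,L_X)=0\Rightarrow H^0=0$. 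For the reverse I would compare two effective divisors on Caporaso's compactification $\overline{\mathfrak{Pic}}^{2g}_g$, namely the syzygy locus $\{K_{p,2}\neq 0\}$ and the theta locus $\{\xi\in\Theta_{\bigwedge^p Q_{\omega_X}}\}$: they coincide over the interior of smooth maximal-gonality curves by \cite{FK1} together with (\ref{eq:fmp}), a class computation shows the former is contained in the reduced latter, and the balancedness of $L_X$, the absence of disconnecting nodes and the maximal gonality $p+3$ are precisely what guarantee that this set-theoretic identification survives at the boundary point $[X,L_X]$. Such a nodal statement has appeared in \cite{Ap,K1}.

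The step I expect to be the genuine obstacle is exactly this reverse inclusion at the boundary. Whereas the soft direction and the FMP identification are essentially formal for Gorenstein curves, controlling the syzygy divisor along the boundary of $\overline{\mathfrak{Pic}}^{2g}_g$ --- ruling out that $K_{p,2}(X,L_X)$ jumps for spurious reasons as $X$ degenerates --- is what forces the quasi-stability, balancedness and maximal-gonality hypotheses, and constitutes the technical core of the argument.
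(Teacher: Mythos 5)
Your proposal has the same skeleton as the paper's proof (comparison of the syzygy and secant loci on Caporaso's compactification, with the interior identification coming from \cite[Theorem 1.4]{FK1} and (\ref{eq:fmp})), but it stops exactly where the actual proof begins: the boundary step that you label ``the genuine obstacle'' and ``the technical core'' is left unproven, and your assertion that balancedness, absence of disconnecting nodes and maximal gonality ``are precisely what guarantee'' that the interior identification survives at $[X,L_X]$ is not an argument. The paper's mechanism for this step is short but essential, and it is not a class computation. Since $X$ has no disconnecting nodes, $\Delta_{\irr}$ is the only boundary divisor of $\mm_{g}$ containing the stabilization of $[X]$; since $\widetilde{\mathfrak{Syz}}$ (the closed determinantal divisor on $\overline{\mathfrak{Pic}}^{2g}_g$ extending $\{K_{p,2}\neq 0\}$, with the determinantal structure of \cite[Section~6]{FK1}) is a divisor, any component of it through $[X,L_X]$ either meets the interior or is a divisorial component of $\pi^{-1}(\Delta_{\irr})$. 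Because $\pi^{-1}(\Delta_{\irr})$ is \emph{irreducible} by Melo--Viviani \cite[Theorem 3.2]{MV}, the latter case would force $\pi^{-1}(\Delta_{\irr})\subseteq \widetilde{\mathfrak{Syz}}$, and this is ruled out by exhibiting a \emph{single} one-nodal irreducible curve $Y$ of genus $2p+3$ with a line bundle $L_Y\in\Pic^{4p+6}(Y)$ satisfying $K_{p,2}(Y,L_Y)=0$, which \cite[Theorem 1.8]{FK1} supplies via curves in ample linear systems on $K3$ surfaces (such systems contain one-nodal irreducible curves). Only then does $K_{p,2}(X,L_X)\neq 0$ place $[X,L_X]$ in $\overline{\mathfrak{Syz}}=\pi^{-1}\left(\mm^1_{g,p+2}\right)\cup\overline{\mathfrak{Sec}}$, after which the gonality hypothesis excludes the Hurwitz part and semicontinuity gives $H^0\left(X,\bigwedge^p Q_{\omega_X}\otimes L_X\otimes\omega_X^{\vee}\right)\neq 0$. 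Your proposal contains neither the irreducibility input nor the $K3$ example, so the core implication is missing.

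There is a second gap in what you call the ``soft'' direction. The identification (\ref{eq:fmp}) is a theorem of \cite{FMP} about \emph{smooth} curves; it is not ``phrased so as to persist on Gorenstein curves,'' and on the nodal curve $X$ the difference-variety side of (\ref{eq:fmp}) has no direct meaning (this is the very reason the paper replaces difference varieties by the theta-divisor condition, which is the only side of (\ref{eq:fmp}) that extends to singular curves). Consequently, you cannot deduce from $\xi\in\Theta_{\bigwedge^p Q_{\omega_X}}$ the existence of a $(p+2)$-secant $p$-plane of $\phi_{L_X}$ on $X$ and then project; that chain is unjustified precisely at the point where $X$ is singular. In the paper, both implications at the nodal point are extracted from the closure equality together with the closedness of the two determinantal conditions, not from secant-plane geometry on $X$ itself. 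Finally, note that balancedness of $L_X$ plays no role in controlling the boundary: it is only what guarantees that $[X,L_X]$ defines a point of $\overline{\mathfrak{Pic}}^{2g}_g$ in the first place.
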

\begin{proof}
We denote by $\widetilde{\mathfrak{Syz}}$ the effective divisor on $\overline{\mathfrak{Pic}}^{2g}_{g}$ consisting of pairs $[X, L_X]$ with $K_{p,2}(X, L_X)\neq 0$. The determinantal structure giving rise to $\widetilde{\mathfrak{Syz}}$ can be read off from \cite[Section~6]{FK1}. We write
$\mathfrak{Syz}:=\widetilde{\mathfrak{Syz}}\cap \mathfrak{Pic}^{2g}_{g}$. Similarly, we consider the \emph{secant divisor}
$$\widetilde{\mathfrak{Sec}}:=\left\{[X,L_X]\in \overline{\mathfrak{Pic}}^{2g}_{g}: H^0\left(\bigwedge^p Q_{\omega_X}\otimes L_X\otimes \omega_X^{\vee}\right)\neq 0\right\}$$
and set $\mathfrak{Sec}:=\widetilde{\mathfrak{Sec}}\cap \mathfrak{Pic}^{2g}_{g}$. Using \cite[Theorem 1.4]{FK1}
and the identification (\ref{eq:diffvar}) of divisorial varieties, we conclude that we have the following set-theoretic equality of divisors on $\overline{\mathfrak{Pic}}^{2g}_g$:
$$\overline{\mathfrak{Syz}}=\pi^{-1}\left(\mm_{g,p+2}^{1}\right) \cup \overline{\mathfrak{Sec}},$$
where $\mm_{g,p+2}^1$ is the Hurwitz divisor consisting of stable curves of gonality at most $p+2$. The closure in both sides of this equality is taken inside $\overline{\mathfrak{Pic}}_g^{2g}$. To conclude,  since $\Delta_{\irr}$ is the only boundary divisor of $\mm_g$ possibly containing the point $[X]\in \mm_g$, it suffices to show that $\widetilde{\mathfrak{Syz}}$ does not contain the boundary divisor $\pi^{-1}\left(\Delta_{\irr}\right)$, where $\Delta_{\irr}$ denotes the closure in $\mm_g$ of the locus of irreducible curves.
 Since $\pi^{-1}\left(\Delta_{\irr}\right)$ is irreducible (see \cite[Theorem 3.2]{MV}), it suffices to provide \emph{one example} of a one-nodal curve $Y$ of genus $2p+3$ and a line bundle $L_Y\in \Pic^{4p+6}(Y)$ such that $K_{p,2}(Y,L_Y)=0$. This follows from \cite[Theorem 1.8]{FK1}, where such a vanishing is provided for all curves lying in an ample linear system on a $K3$ surface. Since such a linear system contains one-nodal irreducible curves, the conclusion follows.
\end{proof}

\section{The secant conjecture and divisorial difference varieties}

In this section we prove Theorem~\ref{thm:GL}. We start with a smooth curve $C$ of genus $g$  and a non-special line bundle $L\in \Pic^{g+2p+3}(C)$, where $\left \lceil \frac{g-3}{2}\right \rceil\leq p\leq g-3$. If $c:=\Cliff(C)$, since $W^1_{c+2}(C)+C_{p-c}\subseteq W^1_{p+2}(C)$, the assumptions on $C$ ensure that  inequality (\ref{ineq:gl}) is satisfied.

The proof of Theorem~\ref{thm:GL} will depend on the parity of $g$. We first treat  that the case of odd genus, when we write
\begin{equation}\label{eq:nota}
g=2i+1,\quad\text{with}\ i\leq p+1.
\end{equation}

We set $\ell:=p+1-i\geq 0$. We pick $2\ell$ pairs of general points $(x_j, y_j)$ on $C$, and we introduce the curve
\begin{equation}\label{def:X}
X:=C\cup_{\{x_1, y_1\}} R_1 \cup \cdots \cup_{\{x_{2\ell}, y_{2\ell}\}} R_{2\ell},
\end{equation}
where each $R_j$ is a smooth rational curve meeting $C$ transversally at the points $x_j$ and $y_j$, for $j=1, \ldots, 2\ell$. The curves $R_{j'}$ and $R_j$ are disjoint for $j\neq j'$.  Note that $X$ is a quasi-stable curve of genus $g(X)=2i+2\ell+1=2p+3$. We further introduce a line bundle $L_X\in \Pic^{4p+6}(X)$ such that
$$L_{X|C} \cong L \quad\text{and}\quad L_{|R_j}\cong \OO_{R_j}(1), \quad\text{for }\  j=1, \ldots, 2\ell.$$
Note that indeed $\deg(L_X)=\deg(L)+2\ell=2i+2p+4+2\ell=4p+6$. Observe, furthermore, that $L_X$ is a \emph{balanced} line bundle on $X$ in the sense of Caporaso; \textit{cf.}  \cite{Ca}. In particular, the pair $[X, L_X]$ can be regarded as a point in the compactification $\overline{\mathfrak{Pic}}^{4p+6}_{2p+3} \rightarrow \mm_{2p+3}$ of the universal Jacobian $\mathfrak{Pic}_{2p+3}^{4p+6}$ constructed in \cite{Ca}. From the generality assumptions on $C$ and on the points $(x_j, y_j)$, we obtain that the stabilization of $[X]\in \mm_{2p+3}$ (obtained by contracting the rational curves $R_j$) is a stable curve of maximal gonality; that is, it does not lie in the Hurwitz divisor $\mm_{2p+3, p+2}^1$ of (stable) curves of gonality at most $p+2$ (see also \cite{Ap, FK2} for variations of this argument).

\begin{proof}[Proof of Theorem~\ref{thm:GL}] We keep the same notation as above and assume $K_{p,2}(C,L)\neq 0$, and we aim to show that $L-\omega_C\in C_{i+\ell+1}-C_{i-\ell-1}$. Applying Proposition~\ref{prop:degen}, we obtain that $K_{p,2}(X,L_X)\neq 0$. The assumption $\dim\ W^1_{p+2}(C)=2p-g+2$ implies that when choosing the points $x_j, y_j\in C$ generally,  $[X]\notin \mm_{2p+3,p+2}^1$, that is, $X$ is not a limit of smooth curves of genus $2p+3$ and gonality $p+2$. We are therefore in a position to apply Proposition~\ref{prop:div}, to obtain that
$$H^0\left(\bigwedge^p Q_{\omega_X}\otimes L_X\otimes \omega_X^{\vee}\right)\neq 0.$$
By Riemann--Roch on $X$, this  condition is equivalent to
$$ H^1\left(\bigwedge^p Q_{\omega_X}\otimes L_X\otimes \omega_X^{\vee}\right)\neq 0,$$
and by  Serre duality, this last condition translates into the non-vanishing statement
\begin{equation}\label{eq:non_van2}
H^0\left(X, \bigwedge^p M_{\omega_X}\otimes \omega_X^2\otimes L_X^{\vee}\right)\neq 0.
\end{equation}

We denote by $D_{2\ell}:=x_1+y_1+\cdots+x_{2\ell}+y_{2\ell}\in C_{4\ell}$ the divisor of marked points on $C$. In order to make the condition (\ref{eq:non_van2}) explicit, we note that since $\omega_{X|R_j}\cong \OO_{R_j}$, one has $M_{\omega_X|R_j} \cong \OO_{R_j}^{\oplus (2p+2)}$ and
$\left(\omega_X^2 \otimes L_X^{\vee}\right)_{|R_j} \cong \OO_{R_j}(-1)$. We write down the following Mayer--Vietoris sequence on $X$:
\begin{align*}
0\longrightarrow H^0\left(X, \bigwedge^p M_{\omega_X}\otimes \omega_X^2\otimes L_X^{\vee}\right)\longrightarrow
H^0\left(C, \bigwedge^p M_{\omega_X|C}\otimes \omega_C^2\otimes L^{\vee}(2D_{2\ell})\right)\oplus \\ \oplus  \ \bigoplus_{j=1}^{2\ell} H^0\left(R_j, \left(\bigwedge^p  \OO_{R_j}^{\oplus (2p+2)}\right)(-1)\right)\longrightarrow
H^0\left(\bigwedge^p M_{\omega_X}\otimes \omega_X^2\otimes L_X^{\vee} \otimes \OO_{D_{2\ell}}\right)\longrightarrow \cdots.
\end{align*}
Since $H^0\left(R_j, \bigwedge^p M_{\omega_X|R_j}\otimes \omega_{X|R_j}^2\otimes L_{R_j}^{\vee}\right)=0$ for each $j=1, \ldots, 2\ell$, we obtain an inclusion
$$
H^0\left(X, \bigwedge^p M_{\omega_X}\otimes \omega_X^2\otimes L_X^{\vee}\right)\longhookrightarrow H^0\left(C, \bigwedge^p M_{\omega_X|C}\otimes \omega_C^2\otimes L^{\vee}(D_{2\ell})\right);
$$
therefore, our hypothesis (\ref{eq:non_van2}) implies that the following equivalent statements also hold:
\begin{equation}\label{eq:non-van3}
H^0\left(C, \bigwedge^p M_{\omega_X|C}\otimes \omega_C^2\otimes L^{\vee}(D_{2\ell})\right)\neq 0\Longleftrightarrow H^1\left(C, \bigwedge^p Q_{\omega_X|C}\otimes L\otimes \omega_C^{\vee}(-D_{2\ell})\right)\neq 0,
\end{equation}
where the equivalence in (\ref{eq:non-van3}) is a consequence of Serre duality.

In order to describe  $M_{\omega_X|C}$, we write down the morphism of short exact sequences
\[
\xymatrixcolsep{4pc}
\xymatrix{
0 \ar[r] & M_{\omega_C} \ar[d]_{} \ar[r] & H^0(C, \omega_C)\otimes \OO_C \ar[d] \ar[r] & \omega_{C} \ar[d]_{} \ar[r] & 0 \\
0 \ar[r]  & M_{\omega_X|C} \ar[r] & H^0(X, \omega_X)\otimes \OO_C \ar[r] & \omega_C(D_{2\ell}) \ar[r] & 0
}
\]
to obtain the following exact sequence of vector bundles on $C$:
$$0\longrightarrow M_{\omega_C}\longrightarrow M_{\omega_X|C}\longrightarrow \bigoplus_{j=1}^{2\ell} \OO_C\left(-x_j-y_j\right) \longrightarrow 0,$$
which, after dualizing, can be rewritten as the exact sequence
\begin{equation}\label{eq:exseq3}
0\longrightarrow \bigoplus_{j=1}^{2\ell} \OO_C\left(x_j+y_j\right)\longrightarrow Q_{\omega_X|C}\longrightarrow Q_{\omega_C}\longrightarrow 0.
\end{equation}

We compute the $\supth{p}$ exterior power of the sequence (\ref{eq:exseq3}). The assumption $p\leq g-3$ translates into $\ell\leq i-1$. There exists a filtration
$$\bigwedge^p Q_{\omega_X|C}=\F^0\supset \F^1\supset \cdots \supset \F^{2\ell}\supset \F^{2\ell+1}=0$$
of vector bundles on $C$ such that the successive quotients are given by
$$\F^{k-1}/\F^{k}\cong \bigwedge^{k-1} \left(\bigoplus_{j=1}^{2\ell} \OO_C(x_j+y_j)\right) \otimes \bigwedge^{p+1-k} Q_{\omega_C}.$$
For each $k=1, \ldots, 2\ell$, we tensor the exact sequence
$$0\longrightarrow \F^k \longrightarrow \F^{k-1} \longrightarrow \bigwedge^{p+1-k} Q_{\omega_C} \otimes \bigwedge^{k-1} \left(\bigoplus_{j=1}^{2\ell} \OO_C\left(x_j+y_j\right)\right)\longrightarrow 0$$
with the line bundle $L\otimes \omega_C^{\vee}(D_{2\ell})$.
Taking cohomology, we obtain that  condition (\ref{eq:non-van3}) implies that there exists an integer $j\leq 2\ell+1$ such that
\begin{equation}\label{eq:non-van4}
H^1\left(C, \bigwedge^{i+\ell-j} Q_{\omega_C}\otimes L\otimes \omega_C^{\vee}(-D_{2\ell-j+1})\right)\neq 0,
\end{equation}
where $D_{2\ell-j+1}$ is an effective divisor of degree $2(2\ell-j+1)$ on $C$ supported on some of the points $x_1, y_1, \ldots, x_{2\ell}, y_{2\ell}$. Since the marked points on $C$ are chosen generally, the divisor $D_{2\ell-j+1}\in C_{2(2\ell-j+1)}$ is general as well; therefore, we obtain that
(\ref{eq:non-van3}) implies the  inclusion
\begin{equation}\label{eq:incl}
L- \omega_C-C_{2(2\ell-j+1)}\subseteq C_{i+j-\ell}-C_{i+\ell-j}
\end{equation}
for some $0\leq j\leq 2\ell+1$. Note that the desired conclusion, that is, $L-\omega_C\in C_{i+\ell+1}-C_{i-\ell-1}$, corresponds precisely to statement  (\ref{eq:incl}) in the extremal case $j=2\ell+1$. We now show that the inclusion (\ref{eq:incl}) for some $j\leq 2\ell+1$ implies statement (\ref{eq:incl}) for $j=2\ell+1$.

We set $\eta:=L\otimes \omega_C^{\vee}\in \Pic^{2\ell+2}(C)$. Condition (\ref{eq:incl}) can be translated into stating that the variety of secant divisors
$$V_{i+\ell-j}^{i+\ell-j-1}\left(\omega_C\otimes \eta^{\vee}\right):=\left\{B\in C_{i+\ell-j}: h^0(C, \eta(B))\geq 1\right\}$$
has dimension at least $2(2\ell-j+1)$, which exceeds by $1$  the expected dimension of the degeneracy locus $V_{i+\ell-j}^{i+\ell-j-1}(\omega_C\otimes \eta)$. We may assume $h^0(C,\eta)=0$; else, since
$$C_{2\ell+2}\subseteq C_{i+\ell+1}-C_{i-\ell-1},$$
the conclusion $\eta\in C_{i+\ell+1}-C_{i-\ell-1}$ is immediate. Therefore, $h^0(C,\eta)=0$, or equivalently, $\omega_C\otimes \eta^{\vee}$ is a non-special line bundle and $h^0(C, \omega_C\otimes \eta^{\vee})=2i-2\ell-2$.

Recalling the definition (\ref{eq:secant_var}) of the varieties of secant divisors, we  apply \cite[Section~VIII.4, p.~356]{ACGH}
to the non-special linear system $|\omega_C\otimes \eta^{\vee}|$ to conclude that $V_{i+\ell-a}^{i+\ell-a-1}(\omega_C\otimes \eta^{\vee})\neq \emptyset$, as long as the following inequality is satisfied:
$$\expdim\ V_{i+\ell-a}^{i+\ell-a-1}\left(\omega_C\otimes \eta^{\vee}\right)=4\ell-2a+1\geq  0\Longleftrightarrow a\leq 2\ell.$$

In particular, setting $a=2\ell$, we find $V_{i-\ell}^{i-\ell-1}\left(\omega_C\otimes \eta^{\vee}\right)\neq \emptyset$, and applying the result of Aprodu and Sernesi \cite[Theorem 4.1]{AS} on secant varieties of excessive dimension, we obtain that (\ref{eq:incl}) implies the estimate
$$\dim\ V_{i-\ell}^{i-\ell-1}\left(\omega_C\otimes \eta^{\vee}\right)\geq \dim\ V_{i+\ell-j}^{i+\ell-j-1}\left(\omega_C\otimes \eta^{\vee}\right)-2(2\ell-j)=2.$$

Thus $V_{i-\ell}^{i-\ell-1}(\omega_C\otimes \eta^{\vee})=\{D\in C_{i-\ell}: H^0(C, \eta(D))\geq 1\}$ is at least $2$-dimensional. Having fixed a base point $p_0\in C$, this implies that the locus
$$Y:=\left\{E\in C_{i-\ell-1}: H^0\left(C, \eta(E+ p_0)\right)\neq 0\right\}$$
has dimension at least $1$. We consider the Abel--Jacobi map $\theta\colon Y\rightarrow \Pic^{i+\ell+2}(C)$  given by $\theta(E):=\eta(E+p_0)$.
If $\theta$ is generically finite, it follows that $\dim\ \theta(Y)\geq1$. Since the locus $p_0+C_{i+\ell+1}$ is an \emph{ample} divisor in $C_{i+\ell+2}$, it follows that $\theta(Y)$ intersects $p_0+C_{i+\ell+1}$; therefore, there exist $E\in C_{i-\ell-1}$ and $E'\in C_{i+\ell+1}$ such that $\eta(E+p_0)=\OO_C(p_0+E')$, amounting to
\begin{equation}\label{eq:conclusion}
L-\omega_C\in C_{i+\ell+1}-C_{i-\ell-1},
\end{equation}
as desired. If on the other hand, $\theta$ is not generically finite, then $h^0\left(C, \eta(E+p_0)\right)\geq 2$, for every $E\in Y$, in which case, once again, one finds an effective divisor $E'\in C_{i+\ell+1}$ such that $\eta(E+p_0)=\OO_C(p_0+E')$, which once more implies the conclusion (\ref{eq:conclusion}). This finishes the proof of Theorem~\ref{thm:GL} in the odd genus case.
\end{proof}

\begin{proof}[Proof of Theorem~\ref{thm:GL} in the case of even genus] The proof follows with minor modification along the lines of the odd genus case. We fix a smooth curve $C$ of genus $g=2i$ satisfying the condition
$$\dim\ W^1_{p+2}(C)=g-2p+2$$ and a line bundle $L\in \Pic^{2i+2p+3}(C)$ such that $K_{p,2}(C,L)\neq 0$. Setting $\ell:=p+1-i\geq 0$ once more, the aim is to show that
$$L-\omega_C\in C_{i+\ell+1}-C_{i-\ell-2}.$$ Note that in this case the assumption $d\leq 2g+p$ translates into $\ell\leq i-2$.

This time we attach $2\ell+1$ rational curves $R_1, \ldots, R_{2\ell+1}$ to $C$ such that  each $R_j$ meets $C$ at two general points $x_j, y_j\in C$. The resulting curve $X$ is a quasi-stable curve of genus $g(X)=2p+3$ and gonality $p+3$. We choose a line bundle $L_X\in \Pic^{4p+6}(X)$ whose restriction to $C$ is the line bundle $L$, whereas $L_{X|R_j}\cong \OO_{R_j}(1)$, for $j=1, \ldots, 2\ell+1$. Following \emph{mot \`a mot} the reasoning in the odd genus case, denoting by
$$D_{2\ell+1}:=(x_1+y_1)+\cdots+ (x_{2\ell+1}+y_{2\ell+1})\in C_{4\ell+2}$$
the divisor of marked points, knowing that
$K_{p,2}(X, L_X)\neq 0$, we obtain that
$$H^0\left(X, \bigwedge^p M_{\omega_X|C}\otimes \omega_C^2\otimes L^{\vee}(D_{2\ell+1})\right)\neq 0.$$
Setting $\eta:=L\otimes \omega_C^{\vee}\in \Pic^{2\ell+3}(C)$,  the same filtration argument as in the odd genus case implies there exists a $j\leq 2\ell+2$ such that
$$\dim\ V_{i+\ell-j}^{i+\ell-j-1}\left(\omega_C\otimes \eta^{\vee}\right)\geq 4\ell-2j+4 \ \left(=\expdim\ V_{i+\ell-j}^{i+\ell-j-1}\left(\omega_C\otimes \eta^{\vee}\right)+1\right).$$
The conclusion $\eta\in C_{i+\ell+1}-C_{i-\ell-2}$ follows by applying once more \cite{AS}.
\end{proof}

\subsection{Brill--Noether theory and the secant conjecture}
Using standard results in Brill--Noether theory, for large values of $p$, the condition
\begin{equation}\label{eq:BNdim}
\dim\ W^1_{p+2}(C)=2p-g+2
\end{equation}
appearing in the statement of Theorem~\ref{thm:GL} is always satisfied, and our results are complete in these cases. In the extremal case $p=g-3$, from Martens' theorem \cite[Theorem~IV.5.1]{ACGH}, we obtain that $\dim\ W^1_{g-1}(C)=g-4$ unless $C$ is hyperelliptic. In this case, we recover \cite[Theorem 3.3]{GL2}. In the next case $p=g-4$, applying Mumford's theorem \cite[Theorem~IV.5.2]{ACGH}, we obtain that if $C$ is a non-bielliptic curve with $\Cliff(C)\geq 2$, then the estimate (\ref{eq:BNdim}) is satisfied and Theorem~\ref{thm:GL} reduces to Agostini's result, \textit{cf.} \cite{Ag}, in the case of line bundles of degree $3g-5$. The next cases are already new results, and we record them.

\begin{thm}\label{thm:keem}\leavevmode
\begin{enumerate}
\item Let $C$ be a smooth curve of genus $g$ with $\Cliff(C)\geq 3$. Then one has the following equivalence for a line bundle $L\in \Pic^{3g-7}(C)$:
$$K_{g-5,2}(C,L)=0  \ \Longleftrightarrow \  L-\omega_C \notin C_{g-3}-C_2.$$

\item Let $C$ be a smooth curve of genus $g\geq 12$ such that $\Cliff(C)\geq 4$. Assume that $C$ is not a triple cover of an elliptic curve.
Then one has the following equivalence for $L\in \Pic^{3g-9}(C)$:
$$K_{g-6,2}(C,L)=0 \ \Longleftrightarrow \  L-\omega_C\notin C_{g-4}-C_3.$$
\end{enumerate}
  \end{thm}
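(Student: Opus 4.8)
The plan is to reduce Theorem~\ref{thm:keem} to the two instances of Theorem~\ref{thm:GL} corresponding to $p = g-5$ and $p = g-6$, so the entire content is the verification that the Brill--Noether hypothesis $\dim W^1_{p+2}(C) = 2p-g+2$ holds under the stated Clifford-index assumptions. I would first record the bookkeeping: for part (1) we have $p=g-5$, so $p+2 = g-3$ and the required dimension is $2p-g+2 = g-8$; for part (2) we have $p=g-6$, so $p+2 = g-4$ and the required dimension is $g-10$. Note also that the hypothesis $\Cliff(C)\geq 3$ (resp.\ $\geq 4$) forces $C$ to be neither hyperelliptic nor trigonal (resp.\ and also of gonality $\geq 5$), so in particular these curves are non-special enough that inequality~(\ref{ineq:gl}) is satisfied, exactly as in the opening paragraph of Section~3.

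First I would treat part (1). The general upper bound from the Brill--Noether theory of $W^1_n(C)$ on a curve that is not hyperelliptic (Martens) gives $\dim W^1_n(C)\leq n-3$ for $n$ in the relevant range, and with $n=g-3$ this reads $\dim W^1_{g-3}(C)\leq g-6$, which is one \emph{more} than the value $g-8$ we need. So the bare inequality is insufficient, and I would invoke the refined structure theorems: Mumford's sharpening of Martens' theorem forces $\dim W^1_{g-3}(C)\leq g-7$ unless $C$ is trigonal, bielliptic, or a smooth plane quintic, and a further application of the Keem/Kim-type refinements (of exactly the kind used in \cite[Section~IV.5]{ACGH}) lowers the bound to the desired $g-8$ once the low-gonality and special exceptional curves are excluded. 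The hypothesis $\Cliff(C)\geq 3$ is precisely engineered to exclude the hyperelliptic, trigonal, bielliptic and plane-quintic loci, so on the complement the dimension count collapses to $2p-g+2$. Having established $\dim W^1_{p+2}(C) = 2p-g+2$, I would simply quote Theorem~\ref{thm:GL} with $p=g-5$, $d = g+2p+3 = 3g-7$, and $g-p-3 = 2$, which yields the stated equivalence $K_{g-5,2}(C,L)=0 \Longleftrightarrow L-\omega_C\notin C_{g-3}-C_2$.

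For part (2) the strategy is identical but one rung deeper, with $p+2 = g-4$ and target dimension $g-10$. Here the generic Martens bound $\dim W^1_{g-4}(C)\leq g-7$ overshoots by three, so I would lean on the finer classification of curves whose $W^1_{g-4}(C)$ has excess dimension: successive applications of the Mumford and Keem refinements show that once $C$ has $\Cliff(C)\geq 4$ (hence gonality $\geq 5$) and $C$ is not a triple cover of an elliptic curve and $g\geq 12$, the dimension drops all the way to $g-10 = 2p-g+2$. The explicit exclusion of trigonal-elliptic covers in the hypothesis is exactly the exceptional family that the dimension estimate cannot rule out on Clifford-index grounds alone, and the bound $g\geq 12$ is what guarantees the relevant Brill--Noether loci behave as expected and that the excess-dimension classification applies cleanly. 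Again, with the hypothesis of Theorem~\ref{thm:GL} verified for $p=g-6$, $d=3g-9$, $g-p-3=3$, the equivalence $K_{g-6,2}(C,L)=0 \Longleftrightarrow L-\omega_C\notin C_{g-4}-C_3$ follows immediately.

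The main obstacle is entirely concentrated in the Brill--Noether dimension estimates, not in any new syzygy-theoretic input: the deductions from Theorem~\ref{thm:GL} are one-line quotations. The delicate point is that the naive Martens/Mumford bounds are off by one (part 1) and by more (part 2), so I must identify \emph{precisely} which exceptional loci carry excess-dimensional $W^1_{p+2}$ and confirm that the Clifford-index and non-triple-cover hypotheses remove exactly those. I expect the hardest bookkeeping to be in part (2), where ruling out the triple covers of elliptic curves and invoking $g\geq 12$ requires the sharp form of the dimension theorems for $W^1_n$ rather than their generic versions; this is where one must be careful that no further sporadic family with excess $\dim W^1_{g-4}(C)$ slips through.
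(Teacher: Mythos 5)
Your proposal is correct and is essentially the paper's own proof: both reduce to Theorem~\ref{thm:GL} with $p=g-5$ (resp.\ $p=g-6$), so that all the work is concentrated in verifying the Brill--Noether hypothesis~(\ref{eq:BNdim}), which the paper does by citing Keem's dimension theorems \cite[Theorem 2.1]{Ke} and \cite[Corollary 3.3]{Ke} --- exactly the ``Keem-type refinements'' of Martens/Mumford that you invoke, whose exceptional loci (low gonality, bielliptic curves, plane curves, triple covers of elliptic curves) are ruled out by the Clifford-index and non-covering hypotheses. The only quibbles are a harmless arithmetic slip (Martens' bound $\dim W^1_{g-3}(C)\leq g-6$ exceeds the target $g-8$ by two, not one) and that your appeal to the refined structure theorems remains descriptive rather than pinned to the precise statements, which is all that the paper's one-line proof supplies.
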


\begin{proof} We apply Theorem~\ref{thm:GL}, observing that  assumption (\ref{eq:BNdim}) is satisfied by applying Keem's result \cite[Theorem 2.1]{Ke} when $p=g-5$ and \cite[Corollary 3.3]{Ke} when $p=g-6$.
\end{proof}

For smaller values of $p$, our understanding of which curves satisfy (\ref{eq:BNdim}) is less complete. In the extremal case $p=\frac{g-3}{2}$,  condition (\ref{eq:BNdim}) reduces to saying that $C$ has maximal gonality. In general, it turns out that the sufficient condition from Theorem~\ref{thm:GL} is related to the study of the curves having infinitely many pencils of minimal degree. The following result makes this connection precise.

\begin{prop}\label{prop:bn}
Let $a\geq 0$ and  $C$ be a smooth curve of genus $g\geq (a+1)(2a+1)$ such that $\Cliff(C)\geq a+1$ and $\dim\  W^1_{a+3}(C)<1$. Then the following equivalence holds for a line bundle $L\in \Pic^{3g-2a-3}(C)$:
$$K_{g-3-a,2}(C,L)=0\ \Longleftrightarrow \  L-\omega_C\notin C_{g-1-a}-C_a.$$
\end{prop}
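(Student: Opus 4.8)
The plan is to apply Theorem~\ref{thm:GL} with $p:=g-3-a$, thereby reducing the proposition to a single Brill--Noether dimension count. With this choice of $p$ one has $p+2=g-1-a$, $g-p-3=a$ and $g+2p+3=3g-2a-3$, so that for $L\in\Pic^{3g-2a-3}(C)$ the equivalence furnished by Theorem~\ref{thm:GL} is verbatim the one asserted here. First I would check that $p$ lies in the admissible range $\lceil\frac{g-3}{2}\rceil\le p\le g-3$: the upper bound is simply $a\ge 0$, while the lower bound is equivalent to $2a+3\le g$, which follows from $g\ge (a+1)(2a+1)=2a^2+3a+1$ as soon as $a\ge 1$. (The case $a=0$ is the classical one: there the required equality $\dim W^1_{g-1}(C)=g-4$ is exactly Martens' theorem for a non-hyperelliptic curve, and the proposition reduces to \cite[Theorem~3.3]{GL2}.) Hence the entire content is to verify the Brill--Noether hypothesis of Theorem~\ref{thm:GL}, namely
\[
\dim\ W^1_{g-1-a}(C)=g-2a-4=\rho(g,1,g-1-a).
\]

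The lower bound $\dim W^1_{g-1-a}(C)\ge\rho(g,1,g-1-a)$ is the existence theorem of Kempf and Kleiman--Laksov \cite[Chapter~V]{ACGH}, which applies because $g\ge(a+1)(2a+1)$ forces $\rho=g-2a-4\ge 0$. Everything therefore rests on the reverse inequality $\dim W^1_{g-1-a}(C)\le g-2a-4$. The starting observation is that $\Cliff(C)\ge a+1$ pins down the gonality $k:=\mathrm{gon}(C)$ to satisfy $k\ge a+3$, since the minimal pencil is a base-point-free $g^1_k$ of Clifford index $k-2\ge\Cliff(C)$. I would then run the standard descending induction on the degree: writing $W^{1,\mathrm{bpf}}_d(C)$ for the locus of $A$ with $|A|$ base-point-free, the map $(A',x)\mapsto A'(x)$ yields a surjection $W^1_{d-1}(C)\times C\twoheadrightarrow\{A\in W^1_d(C):|A|\ \text{has a base point}\}$ and hence $\dim W^1_d(C)\le\max\big(\dim W^{1,\mathrm{bpf}}_d(C),\ \dim W^1_{d-1}(C)+1\big)$. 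Granting the uniform bound $\dim W^{1,\mathrm{bpf}}_d(C)\le d-a-3$ for $a+3\le d\le g-1-a$, and starting the induction at the base case $\dim W^1_{a+3}(C)\le 0$ — which is the content of the hypothesis $\dim W^1_{a+3}(C)<1$ — the recursion propagates $\dim W^1_d(C)\le d-a-3$ all the way up to $d=g-1-a$, where it reads exactly $\dim W^1_{g-1-a}(C)\le g-2a-4=\rho$, as needed. Note that $\rho(g,1,d)=2d-g-2\le d-a-3$ for every $d\le g-1-a$, so this target bound is compatible with the Brill--Noether expectation and is saturated only at the top degree.

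The main obstacle is precisely the bound $\dim W^{1,\mathrm{bpf}}_d(C)\le d-a-3$ on families of base-point-free pencils of intermediate degree (a Clifford-index-type estimate, since $a+1\le\Cliff(C)$): this is the step where the Clifford-index hypothesis and, decisively, the genus bound $g\ge(a+1)(2a+1)$ are indispensable, the latter being what excludes the appearance of excess base-point-free $g^1_d$'s that could violate the expected dimension. I would extract this bound from the dimension theorems for $W^1_d$ of Coppens--Martens and Keem — the same circle of results used, in the first few cases, in Theorem~\ref{thm:keem} through \cite{Ke} — whose genus hypotheses are guaranteed by $g\ge(a+1)(2a+1)$. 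Combining this bound with the induction of the previous paragraph gives $\dim W^1_{g-1-a}(C)=\rho(g,1,g-1-a)$, and Theorem~\ref{thm:GL} then yields the stated equivalence.
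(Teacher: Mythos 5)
Your overall route is exactly the paper's: set $p=g-3-a$, check that $p$ lies in the admissible range $\left\lceil \frac{g-3}{2}\right\rceil\le p\le g-3$, and observe that the proposition then collapses to verifying the Brill--Noether hypothesis (\ref{eq:BNdim}), i.e.\ $\dim W^1_{g-1-a}(C)=g-2a-4$, after which Theorem~\ref{thm:GL} gives the stated equivalence verbatim. This reduction you carry out correctly, and in fact more explicitly than the paper does (the range check for $p$, the $a=0$ edge case via Martens' theorem, and the lower bound $\dim W^1_{g-1-a}(C)\ge \rho$ via Kempf and Kleiman--Laksov are all fine).

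The gap is in the upper bound, which is the only substantive point. Your induction scheme $\dim W^1_d\le\max\bigl(\dim W^{1,\mathrm{bpf}}_d(C),\ \dim W^1_{d-1}(C)+1\bigr)$, started at $\dim W^1_{a+3}(C)\le 0$, is sound scaffolding, but it places the entire content of the proposition in the bound $\dim W^{1,\mathrm{bpf}}_d(C)\le d-a-3$ for all $a+3\le d\le g-1-a$, which you explicitly \emph{grant} rather than prove, offering only an unspecified appeal to ``dimension theorems of Coppens--Martens and Keem.'' That bound is not a formal Clifford-index estimate: a bielliptic curve has $\Cliff(C)=2$ and a one-dimensional family of base-point-free $g^1_4$'s, so for $a=1$ the bound already fails in the absence of the hypothesis $\dim W^1_4(C)<1$; and for $d$ near the top of the range it is precisely the quadratic genus bound $g\ge(a+1)(2a+1)$ that must exclude excess families of base-point-free pencils. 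Proving this claim amounts to proving the theorem that the paper invokes: the paper's entire proof is the single observation that, under exactly the hypotheses of the proposition, Coppens' result \cite[Theorem 15]{Co} yields $\dim W^1_{g-1-a}(C)=g-2a-4$. (Keem's results \cite{Ke} cover only the cases $p=g-5,g-6$ used in Theorem~\ref{thm:keem}, not the general range you need.) So your proposal becomes a complete proof --- and then an essentially identical one, with your induction rendered redundant --- only once the granted step is replaced by that precise citation; as written, the crux is missing.
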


\begin{proof} It suffices to observe that under these hypotheses, applying \cite[Theorem 15]{Co} if $W^1_{a+3}(C)$ is at most zero-dimensional, we necessarily have $\dim\ W^1_{g-1-a}(C)=g-2a-4$, and then  hypothesis (\ref{eq:BNdim}) is satisfied.
\end{proof}

\begin{rem} The first case where Proposition~\ref{prop:bn} may not always apply is for $p=g-7$, where we are not aware of a classification of the smooth $7$-gonal curves $C$ with $\Cliff(C)=5$ and such that $\dim\ W^1_7(C)=1$. One class of curves where condition (\ref{eq:BNdim}) generally fails is that of covers of elliptic curves.  For such curves, however, Kemeny, \textit{cf.} \cite{K3}, recently introduced  novel techniques to understand their syzygies.
\end{rem}


\newcommand{\etalchar}[1]{$^{#1}$}

\end{document}